\newtheorem{thm}{Theorem}
\newtheorem{ob}[thm]{Observation}
\newtheorem{lem}[thm]{Lemma}
\newtheorem{prop}[thm]{Proposition}
\newtheorem{defin}[thm]{Definition}
\newtheorem{conj}[thm]{Conjecture}
\newtheorem{clm}{Claim}[thm]
\newcommand{\barD}{\overline{D}}
\newcommand{\barS}{\overline{S}}
\newcommand{\cP}{\mathcal{P}}
\newcommand{\cA}{\mathcal{A}}
\newcommand{\cT}{\mathcal{T}}
\newcommand{\cC}{\mathcal{C}}
\newcommand{\1}{\vspace{0.1cm}}
\newcommand{\gL}{\gamma_L}
\newcommand{\smallqed}{{\tiny ($\Box$)}}
\newcommand{\claimproof}{\noindent\emph{Proof of claim.} }
\begin{document}

\title{Locating-dominating sets in twin-free graphs}

\author{Florent Foucaud\footnote{\noindent Department of Mathematics, University of Johannesburg, Auckland Park, 2006, South Africa.}~\footnote{\noindent LIMOS, UMR CNRS 6158, Universit\'e Blaise Pascal, Clermont-Ferrand, France.} \and Michael A. Henning\footnotemark[1] \and Christian L\"{o}wenstein\footnote{\noindent Institute of Optimization and Operations Research, Ulm University, Ulm 89081, Germany.} \and Thomas Sasse\footnotemark[3]}

\date{May 19, 2015}

\maketitle

\begin{abstract}
A locating-dominating set of a graph $G$ is a dominating set $D$ of $G$ with the additional property that every two distinct vertices outside $D$ have distinct neighbors in $D$; that is, for distinct vertices $u$ and $v$ outside $D$, $N(u) \cap D \ne N(v) \cap D$ where $N(u)$ denotes the open neighborhood of $u$. A graph is twin-free if every two distinct vertices have distinct open and closed neighborhoods. The location-domination number of $G$, denoted $\gL(G)$, is the minimum cardinality of a locating-dominating set in $G$. It is conjectured [D. Garijo, A. Gonz\'alez and A. M\'arquez. The difference between the metric dimension and the determining number of a graph. \textit{Applied Mathematics and Computation} \textbf{249} (2014), 487--501] that if $G$ is a twin-free graph of order $n$ without isolated vertices, then $\gL(G)\le \frac{n}{2}$. We prove the general bound $\gL(G)\le \frac{2n}{3}$, slightly improving over the $\lfloor\frac{2n}{3}\rfloor+1$ bound of Garijo et al. We then provide constructions of graphs reaching the $\frac{n}{2}$ bound, showing that if the conjecture is true, the family of extremal graphs is a very rich one. Moreover, we characterize the trees $G$ that are extremal for this bound. We finally prove the conjecture for split graphs and co-bipartite graphs.
\end{abstract}


\section{Introduction}

A \emph{dominating set} in a graph $G$ is a set $D$ of vertices of $G$ such that every vertex outside $D$ is adjacent to a vertex in $D$. The \emph{domination number}, $\gamma(G)$, of $G$ is the minimum cardinality of a dominating set in $G$. The literature on the subject of domination parameters in graphs up to the year 1997 has been surveyed and detailed in the two books~\cite{hhs1, hhs2}.
Among the existing variations of domination, the one of \emph{location-domination} is widely studied. A \emph{locating}-\emph{dominating set} is a dominating set $D$ that locates/distinguishes all the vertices in the sense that every vertex not in $D$ is uniquely determined by its neighborhood in $D$. The \emph{location-domination number} of $G$, denoted $\gL(G)$, is the minimum cardinality of a locating-dominating set  in $G$. The concept of a locating-dominating set was introduced and first studied by Slater~\cite{s2,s3} and studied in~\cite{cst,fh,rs,s2,s3,s4} and elsewhere.

A classic result due to Ore~\cite{o9} states that every graph without isolated vertices has a dominating set of cardinality at most half its order. While there are many graphs (without isolated vertices) which have location-domination number much larger than half their order, the only such graphs that are known contain many \emph{twins}, that is, pairs of vertices with the same closed or open neighborhood. It was therefore recently conjectured by Garijo et al.~\cite{conjpaper} that in the absence of twins, the classic bound of one-half the order for the domination number also holds for the location-domination number. In this paper, we continue the study of~\cite{conjpaper} by proving this conjecture for two standard graph classes: split graphs and co-bipartite graphs. We also describe interesting families of graphs that would, if the conjecture is true, provide extremal examples.

\noindent\textbf{Definitions and notations.} For notation and graph theory terminology, we in general follow~\cite{hhs1}. Specifically, let $G$ be a graph with vertex set $V(G)$, edge set $E(G)$ and with no isolated vertex. The \emph{open neighborhood} of a vertex $v \in V(G)$ is $N_G(v) = \{u \in V \, | \, uv \in E(G)\}$ and its \emph{closed neighborhood} is the set $N_G[v] = N_G(v) \cup \{v\}$. For a set $S$ of vertices of $G$, $N_G[S]$ is the union of all closed neighborhoods of vertices in $S$. The \emph{degree} of $v$ is $d_G(v) = |N_G(v)|$. If the graph $G$ is clear from the context, we simply write $V$, $E$, $N(v)$, $N[v]$, $N[S]$ and $d(v)$ rather than $V(G)$, $E(G)$, $N_G(v)$, $N_G[v]$, $N_G[S]$ and $d_G(v)$, respectively. 
%

A set $D$ is a dominating set of $G$ if $N[v] \cap D \ne \emptyset$ for every vertex $v$ in $G$, or, equivalently, $N[D] = V(G)$. Two distinct vertices $u$ and $v$ in $V(G) \setminus D$ are \emph{located} (or \emph{distinguished}) by $D$ if they have distinct neighbors in $D$; that is, $N(u) \cap D \ne N(v) \cap D$. If a vertex $u \in V(G) \setminus D$ is located from every other vertex in $V(G)\setminus D$, we simply say that $u$ is \emph{located} (or \emph{distinguished}) by $D$.

A set $S$ is a \emph{locating set} of $G$ if every two distinct vertices outside $S$ are located by $S$. In particular, if $S$ is both a dominating set and a locating set, then $S$ is a locating-dominating set.
We remark that the only difference between a locating set and a locating-dominating set in $G$ is that a locating set might have a unique non-dominated vertex.

An \emph{independent set} in $G$ is a set of vertices no two of which are adjacent, and a \emph{clique} is a set of vertices every two of which are adjacent. The \emph{independence number} and the \emph{clique number} of $G$ are the maximum cardinality of an independent set and a clique in $G$, respectively.  The complement of an independent set in $G$ is a \emph{vertex cover} in $G$. Thus if $S$ is a vertex cover in $G$, then every edge of $G$ is incident with at least one vertex in $S$.
Two edges in a graph $G$ are \emph{independent} if they are vertex-disjoint in $G$. A set of pairwise independent edges of $G$ is called a \emph{matching} in $G$. A \emph{perfect matching} $M$ in $G$ is a matching such that every vertex of $G$ is incident to an edge of $M$.

Two distinct vertices $u$ and $v$ of a graph $G$ are \emph{open twins} if $N(u)=N(v)$ and \emph{closed twins} if $N[u]=N[v]$. Further, $u$ and $v$ are \emph{twins} in $G$ if they are open twins or closed twins in $G$. A graph is \emph{twin-free} if it has no twins.

A \emph{clique} in $G$ is a set of vertices that induce a complete subgraph.
A \emph{split graph} is a graph whose vertex set can be partitioned into an independent set and a clique, and a \emph{co}-\emph{bipartite graph} is a graph whose vertex set can be partitioned into two cliques.
%
%
We use the standard notation $[k] = \{1,2,\ldots,k\}$.

\noindent\textbf{Conjectures and known results.} The conjecture that motivates our study is stated as follows:

\begin{conj}[Garijo et al. \cite{conjpaper}]\label{conj}
Every twin-free graph $G$ of order $n$ without isolated vertices satisfies $\gL(G)\le \frac{n}{2}$.
\end{conj}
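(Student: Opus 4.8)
The plan is to recast the bound in complementary form. To prove $\gL(G)\le\frac{n}{2}$ it suffices to produce a set $I\subseteq V(G)$ with $|I|\ge\frac{n}{2}$ whose complement $D=V(G)\setminus I$ is locating-dominating. Unpacking the two requirements, this means (i) every $v\in I$ has a neighbor in $D$ (so $D$ dominates) and (ii) the map $v\mapsto N(v)\cap D$ is injective on $I$ (so $D$ locates); equivalently, the vertices of $I$ receive pairwise distinct nonempty codes from the outside. Thus I would reformulate the whole problem as the search for a large set $I$ admitting such codes. Twin-freeness enters as a local regularity condition that rules out the obstructions responsible for large $\gL$ in general: since no two vertices share an open neighborhood, $G$ has no two leaves at a common support vertex, and more generally no cluster of mutually equivalent vertices that would need to be placed almost entirely inside $D$.

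The cleanest sufficient structure I would chase is a private-representative system: assign to each $v\in I$ a vertex $p(v)\in D$ adjacent to $v$ and to no other vertex of $I$. This forces (i) and (ii) at once, since $v$ is then the unique out-vertex adjacent to $p(v)$. Because $p$ is injective we get $|I|\le|D|=n-|I|$, i.e.\ $|I|\le\frac{n}{2}$, so this structure saturates exactly at the conjectured threshold and pushes us toward a (near-)perfect matching of $G$: I would take a maximum matching $M$, place one endpoint of each edge into $I$, and keep its partner as the private representative. The remaining work is then a $2$-coloring of the edges of $M$ subject to the constraint that no chosen representative has a selected $I$-neighbor other than its partner.

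As a second, independent line I would set up an induction on $n$, deleting a removable configuration --- a vertex guaranteed a private neighbor, a pendant path, or a low-degree vertex with its neighborhood --- then applying the hypothesis to the smaller graph and re-inserting the deleted vertices into $I$, charging each against its own representative. The accounting is clean whenever the deleted piece contributes at least as many out-vertices as it consumes representatives, and the twin-free structure around leaves and their supports makes these the natural base configurations.

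The main obstacle --- and the reason the statement is still only a conjecture in full generality --- is the tension between (i) and (ii): selecting $v$ into $I$ to secure its own code may eliminate a neighbor's only private representative, and no global argument is known that always resolves these conflicts; equally, $G$ need not have a perfect matching, and the private-representative system, though tight in the counting, is too rigid to reach $\frac{n}{2}$ by itself. Compounding this, deleting vertices in the induction can create fresh open or closed twins, so the inductive hypothesis may fail on the reduced graph, and restoring twin-freeness after deletion is delicate. I therefore expect this plan to yield unconditionally only the weaker bound $\gL(G)\le\frac{2n}{3}$ --- obtained by relaxing the representative system so that out-vertices may share representatives provided a secondary neighbor still separates them, which amounts to leaving out roughly one vertex of every three --- with the full $\frac{n}{2}$ bound accessible only after extra structure is imposed, as is done later for split graphs and co-bipartite graphs.
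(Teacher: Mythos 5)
You have correctly recognized the essential point: this statement is an open conjecture, not a theorem of the paper, and no proof of it exists --- the paper itself only proves the weaker general bound $\gL(G)\le \frac{2n}{3}$, verifies the conjecture for split and co-bipartite graphs, and constructs extremal families showing the bound $\frac{n}{2}$ would be tight. Your proposal is therefore not a proof and does not claim to be one, which is the appropriate conclusion. Your sketched techniques also line up well with what is actually known: the private-representative system you describe (an injective assignment $p\colon I \to D$ of private neighbors, forcing $|I| \le |D|$ and hence saturating exactly at $\frac{n}{2}$) is essentially the matching-based technique that Garijo et al.\ used to prove the conjecture for graphs without $4$-cycles, and your complementary reformulation (find a large set $I$ whose complement locates it) is the viewpoint behind Proposition~\ref{prop:VC}, that any vertex cover of a twin-free graph is locating-dominating. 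One point of divergence: the paper's proof of the $\frac{2n}{3}$ bound (Theorem~\ref{t:generalbd}) is not your ``relaxed representative system.'' It starts from a Bollob\'as--Cockayne minimum dominating set $S$ in which every vertex has an $S$-external private neighbor, extends $S$ to an inclusion-wise maximal superset $D$ satisfying $n_1(D)+n_2(D)\ge |D|$, and then plays two complementary locating-dominating sets against each other --- $D \cup X_D$ of size $|D|+n_1(D)$, and $D \cup Y'_D$ of size $n-n_1(D)-n_2(D)$ --- so that the minimum of the two sizes is forced below $\frac{2n}{3}$ by a short counting argument. Your sketch of how sharing representatives ``amounts to leaving out roughly one vertex of every three'' would need a concrete mechanism of exactly this kind to be made rigorous; as written it is a heuristic, not an argument. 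But as an assessment of why the full $\frac{n}{2}$ bound resists these methods (no perfect matching in general, twins created by deletion, conflicts between domination and location), your analysis is sound and consistent with the paper's framing of the problem as genuinely open.
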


Using a proof technique based on matchings, the authors of~\cite{conjpaper} proved Conjecture~\ref{conj} for graphs without $4$-cycles (which include trees). They also proved that a vertex cover of a twin-free graph is a locating-dominating set.

\begin{prop}[Garijo et al. \cite{conjpaper}]\label{prop:VC}
If $G$ is a twin-free graph without isolated vertices, then every vertex cover of $G$ is also a locating-dominating set.
\end{prop}

As an immediate consequence of Proposition~\ref{prop:VC}, graphs with independence number at least half the order verify the conjecture. In particular, this is true for bipartite graphs.
Using the relation $\gL(G)\le \gL(\overline{G})+1$ relating the location-domination number of a graph $G$ and its complement $\overline{G}$ (discovered by Hernando et al.~\cite{hmp14}), Garijo et al.~\cite{conjpaper} observed that a twin-free graph of order~$n$ with clique number at least $\lceil\frac{n}{2}\rceil+1$ also satisfies the conjectured bound.

The best general upper bound to date is due to Garijo et al. \cite{conjpaper} who showed that $\gL(G)\le \lfloor\frac{2n}{3}\rfloor+1$ holds for every twin-free graph $G$ of order~$n$ and without isolated vertices.

In an earlier paper, Henning and L\"owenstein~\cite{hl12} proved that every connected cubic claw-free graph (not necessarily twin-free) has a locating-total dominating set\footnote{A locating-total dominating set $D$ is a locating-dominating set that is also a total domnating set, that is, every vertex of the graph has a neighbor in $D$.} of size half its order, which implies that Conjecture~\ref{conj} is true for such graphs. Moreover they conjectured this to be true for every connected cubic graph, with two exceptions --- which, if true, would imply Conjecture~\ref{conj} for cubic graphs.

\noindent\textbf{Our results.} We slightly improve the general bound of~\cite{conjpaper} by proving the bound $\gL(G)\le \frac{2n}{3}$ in Section~\ref{sec:general}. In Section~\ref{sec:examples}, we provide several constructions for graphs with location-domination number half their order and we characterize all trees for which the bound is tight. The variety of these constructions shows that these graphs have a rich structure, which is an indication that Conjecture~\ref{conj} might be difficult to prove. We then continue to give support to Conjecture~\ref{conj} by proving it for split graphs and co-bipartite graphs in Section~\ref{sec:cobip-split}.

\section{General bound}\label{sec:general}

The authors of~\cite{conjpaper} proved that every twin-free graph $G$ of order $n$ without isolated vertices satisfies $\gL(G)\le \lfloor\frac{2n}{3}\rfloor+1$. We slightly improve this bound in the following theorem. For this purpose, we shall need the following well-known property of minimum dominating sets in graphs first observed by Bollob\'{a}s and Cockayne~\cite{BC79indep}. Given a set $S$ in a graph $G$ and a vertex $v\in S$, an \emph{$S$-external private neighbor} of $v$ is a vertex outside $S$ that is adjacent to $v$ but to no other vertex of $S$ in $G$.

\begin{prop}[Bollob\'{a}s, Cockayne~\cite{BC79indep}]\label{p:BC}
If $G$ is a graph with no isolated vertex, then there exists a
minimum dominating set $S$ in $G$ with the property that every vertex of $S$ has an $S$-external private neighbor.
\end{prop}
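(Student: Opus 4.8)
The plan is to combine an extremal choice of minimum dominating set with a local exchange argument. Among all minimum dominating sets of $G$, I would select one, call it $S$, that \emph{maximizes} the number of induced edges $|E(G[S])|$. The goal is then to show that this particular $S$ already has the desired property, arguing by contradiction: suppose some vertex $v \in S$ has no $S$-external private neighbor.

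The first step is to dispose of the case where $v$ has a neighbor inside $S$. If $v$ has a neighbor in $S$, then $v$ is dominated by $S \setminus \{v\}$; moreover, since $v$ has no $S$-external private neighbor, every neighbor of $v$ lying outside $S$ is dominated by some vertex of $S$ other than $v$, hence by $S \setminus \{v\}$. Consequently $S \setminus \{v\}$ is still a dominating set, contradicting the minimality of $S$. This forces the remaining case: $v$ is isolated in the induced subgraph $G[S]$, that is, $v$ has no neighbor in $S$.

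In this remaining case I would perform a swap. Since $G$ has no isolated vertex and $v$ has no neighbor in $S$, the vertex $v$ has a neighbor $u \notin S$. Because $v$ has no $S$-external private neighbor, $u$ must have a further neighbor $w \in S$ with $w \ne v$. Set $S' = (S \setminus \{v\}) \cup \{u\}$. One checks that $S'$ is again a dominating set of the same cardinality as $S$: every vertex previously dominated by $S \setminus \{v\}$ remains dominated, $v$ itself is now dominated by $u$, and $v$ had no external private neighbor that could be left undominated. Thus $S'$ is a minimum dominating set. Comparing induced edges, deleting $v$ removes no edge from $G[S]$ since $v$ was isolated there, while inserting $u$ creates at least the edge $uw$ inside $S'$; hence $|E(G[S'])| \ge |E(G[S])| + 1$, contradicting the maximality of $|E(G[S])|$.

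The two contradictions together show that every vertex of $S$ has an $S$-external private neighbor. The step I expect to be the main obstacle — or at least the one demanding the most care — is verifying that the swapped set $S'$ is still dominating; this is precisely where both the hypothesis that $G$ has no isolated vertex and the exact meaning of ``no $S$-external private neighbor'' are used. The conceptual crux is choosing the right extremal quantity, namely the number of induced edges to be \emph{maximized} rather than minimized, so that the exchange in the isolated case pushes the quantity in a consistent direction.
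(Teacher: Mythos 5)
Your proof is correct. Note that the paper does not prove this proposition at all: it is quoted as a known result of Bollob\'as and Cockayne~\cite{BC79indep}, so there is no internal proof to compare against. Your argument --- choosing among minimum dominating sets one maximizing $|E(G[S])|$, handling the case where $v$ has a neighbor in $S$ by deleting $v$ (contradicting minimality), and handling the case where $v$ is isolated in $G[S]$ by the swap $S' = (S \setminus \{v\}) \cup \{u\}$ (contradicting edge-maximality) --- is sound in every step and is essentially the classical proof of this fact; in particular, your verification that $S'$ dominates is right, since any vertex whose only dominator in $S$ were $v$ would be an $S$-external private neighbor of $v$, which is excluded by hypothesis.
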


\begin{thm}
If $G$ is a twin-free graph of order~$n$ with no isolated vertices, then $\gL(G) \le 2n/3$.
 \label{t:generalbd}
\end{thm}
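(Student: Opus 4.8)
The plan is to keep a fixed minimum dominating set inside our locating-dominating set and then decide cleverly which of the remaining vertices to discard. First I would invoke Proposition~\ref{p:BC} to fix a minimum dominating set $S$ with $|S|=\gamma(G)$ in which every vertex $s\in S$ has an $S$-external private neighbour; choosing one such neighbour for each $s$ yields a set $P$ of $\gamma(G)$ distinct vertices, and I write $W=V(G)\setminus S$. The guiding observation is that any $D$ with $S\subseteq D$ is automatically dominating, so the whole problem reduces to choosing a \emph{large} set $U\subseteq W$ to leave out of $D$ subject only to the locating condition: the vertices of $U$ must receive pairwise distinct traces $N(u)\cap D$. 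Two discarded vertices lying in different $S$-neighbourhood classes are already separated by $S$, so locating only needs to be checked within each class $C$, the set of vertices of $W$ sharing one common neighbourhood $N_0\subseteq S$.

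I would then establish two complementary upper bounds on $\gL(G)$ and take the better one. For the first, leaving out exactly $U=P$ works: since $N(p_i)\cap S=\{s_i\}$ with the $s_i$ pairwise distinct, the private neighbours already carry pairwise distinct traces, so $D=V(G)\setminus P$ is locating-dominating and $\gL(G)\le n-\gamma(G)$. For the second, I would show one can always discard at least half of $W$, giving $\gL(G)\le |S|+\tfrac12|W|=\tfrac{n+\gamma(G)}{2}$. Both bounds hold unconditionally, and together they finish the theorem at once, since $\min\{\,n-\gamma(G),\ \tfrac{n+\gamma(G)}{2}\,\}\le\tfrac{2n}{3}$ for every value of $\gamma(G)$: the two expressions are decreasing and increasing in $\gamma(G)$ respectively and meet exactly at $\gamma(G)=n/3$, where both equal $2n/3$.

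The engine of the second bound is a structural consequence of twin-freeness. Within a single class $C$ all vertices share the same neighbourhood $N_0$ in $S$, so if two of them also had the same neighbourhood inside $W$ they would be open twins; hence $u\mapsto N(u)\cap W$ is injective on each class. Consequently, for any two vertices $u,u'$ of a class their symmetric difference meets $W$ in a nonempty ``witness set'' $A_{uu'}=(N(u)\triangle N(u'))\cap W$, and $U$ is locating precisely when no such witness set (for a pair inside $U$) is entirely swallowed by $U$. I would select $U$ greedily, or via a pairing argument across the classes, discarding vertices while always retaining, for every still-dangerous pair, at least one witness in $D=W\setminus U$.

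The hard part will be exactly this last selection: guaranteeing that one can keep $|U|\ge\tfrac12|W|$ while never leaving all witnesses of an in-$U$ pair outside $D$. The difficulty is that witnesses may themselves be vertices we wish to discard, so the choices for different pairs and different classes are coupled; adjacent vertices are especially delicate, since each lies in the witness set of the other. I expect that controlling this coupling — most naturally by pairing each discarded vertex with a retained witness and arguing that twin-freeness supplies enough distinct witnesses to push such a pairing up to the $\tfrac12|W|$ threshold — is where the real work of the proof lies.
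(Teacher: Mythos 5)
Your first bound is fine: $V(G)\setminus P$ is indeed locating-dominating (each $p_i$ retains its private dominator $s_i$ in its trace, and $s_i\notin N(p_j)$ for $j\ne i$), so $\gL(G)\le n-\gamma(G)$, and your arithmetic for combining the two bounds is correct. The genuine gap is your second bound, $\gL(G)\le \frac{n+\gamma(G)}{2}$, which you leave as "the hard part." This is not a detail that a greedy or pairing argument will fill: in precisely the regime where you need it ($\gamma(G)<n/3$), it is essentially Conjecture~\ref{conj} itself. Concretely, take $\gamma(G)=1$, so $S=\{s\}$ with $s$ a universal vertex. Then $W=V(G)\setminus\{s\}$ is a single neighborhood class, the private-neighbor property of $S$ gives no leverage, and your claim asserts that every twin-free graph with a universal vertex has a locating-dominating set of size at most $\frac{n+1}{2}$ (containing $s$). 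Since enlarging a locating-dominating set preserves the property, this is the conjectured half-order bound up to one vertex for that whole class of graphs --- a statement much stronger than the $2n/3$ theorem being proved, and not covered by any known case of the conjecture (such graphs need not be split, co-bipartite, or $C_4$-free). Your "witness set" observation only restates twin-freeness of the subgraph induced on a class; selecting half of a class to discard is exactly the open locating-set problem for that twin-free subgraph.

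The paper's proof shows how to avoid ever needing a "discard half of $W$" statement. It also balances two bounds, but both are attached to the partition of $\barS$ into neighborhood classes for a carefully chosen superset $D\supseteq S$: one takes $D$ inclusion-wise maximal subject to $k(D)\ge |D|$, where $k(D)=n_1(D)+n_2(D)$ counts the classes ($n_1$ singleton classes, $n_2$ larger ones). One bound discards one vertex per class, giving $n-k(D)$, which is already at least as good as your $n-\gamma(G)$. The second bound is not of the form $|S|+\frac{1}{2}|W|$ at all: it is $|D|+n_1(D)$, i.e., keep $D$ together with all singleton-class vertices $X_D$. That this is locating-dominating is the paper's Claim~\ref{claimA}, and it is exactly here that the maximality of $D$ does the work: for two vertices $u,v$ in a class of size at least $2$, twin-freeness provides a separating vertex $w$, and if $w$ lay in a large class one could add it to $D$ and strictly increase $k$ while keeping $k\ge |D|$, contradicting maximality; hence $w\in X_D$. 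The balancing is then on $k(D)$ rather than on $\gamma(G)$: if $n-k(D)>\frac{2}{3}n$ then $k(D)<\frac{n}{3}$, and since $|D|\le k(D)$ and $n_1(D)\le k(D)$, the other bound gives $|D|+n_1(D)\le 2k(D)<\frac{2}{3}n$. This maximal-extension device is the idea missing from your proposal, and without it (or something of comparable strength) your route dead-ends at a claim equivalent in difficulty to the conjecture.
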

\begin{proof}
For an arbitrary subset $S$ of vertices in $G$, let $\cP_S$ be a partition of $\barS = V(G) \setminus S$ with the property that all vertices in the same part of the partition have the same open neighborhood in $S$ and vertices from different parts of the partition have different open neighborhood in $S$. Let $|\cP_S| = k(S)$. Let $X_S$ be the set of vertices in $\barS$ that belong to a partition set in $\cP_S$ of size~$1$ and let $Y_S = \barS \setminus X_S$. Hence every vertex in $Y_S$ belongs to a partition set of size at least~$2$. Let $n_1(S) = |X_S|$ and let $n_2(S) = k(S) - n_1(S)$.
Let $S$ be a minimum dominating set in $G$ with the property that every vertex of $S$ has an $S$-external private neighbor. Such a set exists by Proposition~\ref{p:BC}. We note that $n_1(S) + n_2(S) \ge |S|$ since every vertex of $S$ has an external private neighbor.
Among all supersets $S'$ of $S$ with the property that $n_1(S') + n_2(S') \ge |S'|$, let $D$ be chosen to be inclusion-wise maximal. (Possibly, $D = S$.)

\begin{clm}\label{claimA}
The vertices in each partition set of size at least~$2$ in $\cP_D$ have distinct neighborhoods in $X_D$, and therefore $D \cup X_D$ is a locating-dominating set of $G$.
\end{clm}
\claimproof
Let $u$ and $v$ be two vertices that belong to a partition set $T$, of size at least~$2$ in $\cP_D$. Since $G$ is twin-free, there exists a vertex $w \notin \{u,v\}$ that is adjacent to exactly one of $u$ and~$v$.
Since $u$ and $v$ have the same neighbors in $D$, we note that $w \notin D$.
Hence, $w \in \barD = V(G) \setminus D$.
Suppose for a contradiction that $w \in Y_D$ and consider the set $D' = D \cup \{w\}$.
Let $R$ be an arbitrary partition set in $\cP_{D}$ that might or might not contain $w$.
If $w$ is either adjacent to every vertex of $R\setminus\{w\}$ or adjacent to no vertex in $R\setminus\{w\}$, then $R\setminus\{w\}$ is a partition set in $\cP_{D'}$.
If $w$ is adjacent to some, but not all, vertices of $R\setminus\{w\}$, then there is a partition $R\setminus\{w\} = (R_1,R_2)$ of $R\setminus\{w\}$ where $R_1$ are the vertices in $R\setminus\{w\}$ adjacent to $w$ and $R_2$ are the remaining vertices in $R\setminus\{w\}$.
In this case, both sets $R_1$ and $R_2$ form a partition set in $\cP_{D'}$.
In particular, we note that there is a partition $T\setminus\{w\} = (T_1,T_2)$ of $T\setminus\{w\}$ where both sets $T_1$ and $T_2$ form a partition set in $\cP_{D'}$.
Therefore, $n_1(D') + n_2(D') \ge n_1(D) + n_2(D) + 1 \ge |D| + 1 = |D'|$, contradicting the maximality of $D$. Hence, $w \notin Y_D$. Therefore, $w \in X_D$.
Hence, $u$ and $v$ are located by the set $X_D$ in $G$.~\smallqed

\medskip
Let $Y'_D$ be obtained from $Y_D$ by deleting one vertex from each partition set of size at least~$2$ in $\cP_D$, and let $D' = D \cup Y'_D$. Then, $|D'| = n - n_1(D) - n_2(D)$. By definition of the partition $\cP_D$, every vertex in $V(G) \setminus D'$ has a distinct nonempty neighborhood in $D$ and therefore in $D'$. Hence we have the following claim.

\begin{clm}\label{claimB}
The set $D'$ is a locating-dominating set of $G$.
\end{clm}

By Claim~\ref{claimA}, the set $D \cup X_D$ is a locating-dominating set of $G$ of cardinality~$|D| + n_1(D)$. By Claim~\ref{claimB}, the set $D'$ is a locating-dominating set of $G$ of cardinality~$n - n_1(D) - n_2(D)$. Hence,
\begin{equation}
\gL(G) \le \min \{ |D| + n_1(D), n - n_1(D) - n_2(D) \}.
\label{Eq1}
\end{equation}
Inequality~(\ref{Eq1}) implies that if $n - n_1(D) - n_2(D) \le \frac{2}{3}n$, then $\gL(G) \le 2n/3$. Hence we may assume that $n - n_1(D) - n_2(D) > \frac{2}{3}n$, for otherwise the desired upper bound on $\gL(G)$ follows. With this assumption, $n_1(D) + n_2(D) < \frac{1}{3}n$. By our choice of the set $D$, we recall that $|D| \le n_1(D) + n_2(D)$. Therefore,
\[
|D| + n_1(D) \le 2n_1(D) + n_2(D) \le 2(n_1(D) + n_2(D)) < \frac{2}{3}n.
\]
Hence, by Inequality~(\ref{Eq1}), $\gL(G) < 2n/3$. This completes the proof of Theorem~\ref{t:generalbd}.
\end{proof}

\section{Twin-free graphs with location-domination number half their order}\label{sec:examples}

We observe that every connected graph $G$ on four or six vertices has location-domination number at least half its order. This is clear if $G$ has four vertices. If $G$ has six vertices, then $\gL(G)\geq 3$. Indeed, suppose to the contrary that there is a locating-dominating set $D$ of size~$2$. Then, two vertices of $V(G)\setminus D$ can be dominated by a single vertex, and one, by two vertices. But then $G$ has at most five vertices, a contradiction. Hence, the class of twin-free graphs of order~$6$ already yields a simple example of graphs that are extremal with respect to Conjecture~\ref{conj}.

In the remaining part of this section, we provide infinite families of twin-free graphs with location-domination number half their order.

\subsection{Families of graphs with small domination number but location-domination number half the order}

Every twin-free graph with domination number half the order also has location-domination number at least half its order. These graphs are known to be exactly the graphs where each component is either a $4$-cycle (but then the graph is not twin-free), or it is a corona graph, that is, it has been obtained from any graph by adding a pending edge to each of its vertices~\cite{px82}. Indeed it is clear that in such graphs, every dominating set is also a locating-dominating set.
However, not all graphs with large location-domination number have large domination number. Perhaps the simplest class of connected twin-free graphs with large location-domination number but small domination number is the class of graphs constructed as follows. For $k \ge 3$, let $H_k$ be the graph obtained from $K_{2,k}$ by selecting one of the two vertices of degree~$k$ and subdividing every edge incident with it, and then adding a pendant edge to both vertices of degree~$k$. The resulting graph, $H_k$, has order $2k+4$, domination number~$2$, and location-domination number exactly one-half the order (namely, $k+2$). The graph $H_4$, for example, is illustrated in Figure~\ref{f:H4}, where the darkened vertices form a minimum locating-dominating set in $H_4$.

\begin{figure}[htb]
\tikzstyle{every node}=[circle, draw, fill=black!0, inner sep=0pt,minimum width=.16cm]
\begin{center}
\begin{tikzpicture}[thick,scale=.6]
  \draw(0,0) { 
    +(2.50,2.50) -- +(3.75,2.50)
    +(3.75,1.88) -- +(2.50,1.88)
    +(2.50,2.50) -- +(1.25,1.25)
    +(1.25,1.25) -- +(0.00,1.25)
    +(1.25,1.25) -- +(2.50,1.88)
    +(1.25,1.25) -- +(2.50,0.63)
    +(1.25,1.25) -- +(2.50,0.00)
    +(2.50,0.00) -- +(3.75,0.00)
    +(3.75,0.00) -- +(5.00,1.25)
    +(5.00,1.25) -- +(3.75,1.88)
    +(3.75,2.50) -- +(5.00,1.25)
    +(5.00,1.25) -- +(6.25,1.25)
    +(5.00,1.25) -- +(3.75,0.63)
    +(3.75,0.63) -- +(2.50,0.63)
    +(3.75,0.63) node{}
    +(2.50,0.63) node[fill=black!100]{}
    +(2.50,1.88) node[fill=black!100]{}
    +(2.50,2.50) node[fill=black!100]{}
    +(3.75,2.50) node{}
    +(3.75,1.88) node{}
    +(2.50,0.00) node[fill=black!100]{}
    +(3.75,0.00) node{}
    +(5.00,1.25) node[fill=black!100]{}
    +(1.25,1.25) node[fill=black!100]{}
    +(0.00,1.25) node{}
    +(6.25,1.25) node{}
  };
\end{tikzpicture}
\end{center}
\vskip -0.6 cm \caption{The graph $H_4$.} \label{f:H4}
\end{figure}
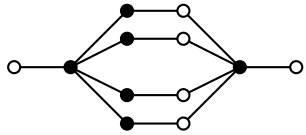

The following construction provides a family of dense twin-free graphs with domination number~2, but location-domination number one-half the order.

\begin{defin}[\cite{fgknpv11}]\label{def:Ak}
For an integer $k\ge  2$, let $A_k=(V_k, E_k)$ be the graph with
vertex set $V_k=\{x_1,\ldots,x_{2k}\}$ and edge set $E_k=\{x_{i}x_{j},
|i-j| \le  k-1\}$.
\end{defin}

All graphs of this family, defined in~\cite{fgknpv11} in the context of identifying codes, are twin-free and co-bipartite (and hence have domination number~2). In fact each graph $A_k$ is isomorphic to the $k$-th distance power of the path $P_{2k}$. See Figure~\ref{fig:A_k} for an illustration, where the darkened vertices form a minimum locating-dominating set in $A_k$.

    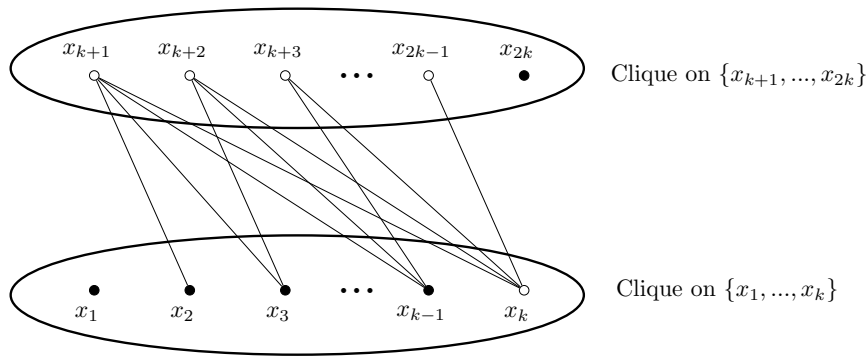
\begin{figure}[ht]
        \begin{center}
\scalebox{0.9}{\begin{tikzpicture}[join=bevel,inner sep=0.5mm,]
  \node (x_kplus1) at (90bp,200bp) [draw, circle] {};
  \node  at (87bp,210bp) {$x_{k+1}$};
  \node (x_kplus2) at (130bp,200bp) [draw, circle] {};
  \node  at (127bp,210bp) {$x_{k+2}$};
  \node (x_kplus3) at (170bp,200bp) [draw, circle] {};
  \node  at (167bp,210bp) {$x_{k+3}$};
  \node[scale=2] at (200bp,200bp) {$...$};
  \node (x_2kminus1) at (230bp,200bp) [draw, circle] {};
  \node  at (227bp,210bp) {$x_{2k-1}$};
  \node (x_2k) at (270bp,200bp) [draw, circle, fill=black] {};
  \node  at (267bp,210bp) {$x_{2k}$};
  \node (x1) at (90bp,110bp) [draw, circle, fill=black] {};
  \node  at (87bp,100bp) {$x_1$};
  \node (x2) at (130bp,110bp) [draw, circle, fill=black] {};
  \node  at (127bp,100bp) {$x_2$};
  \node (x3) at (170bp,110bp) [draw, circle, fill=black] {};
  \node  at (167bp,100bp) {$x_3$};
  \node[scale=2] at (200bp,110bp) {$...$};
  \node (x_kminus1) at (230bp,110bp) [draw, circle, fill=black] {};
  \node  at (227bp,100bp) {$x_{k-1}$};
  \node (x_k) at (270bp,110bp) [draw, circle] {};
  \node  at (267bp,100bp) {$x_k$};
  \draw [-] (x_kplus1) -- node[above,sloped] {} (x_k);
  \draw [-] (x_kplus1) -- node[above,sloped] {} (x2);
  \draw [-] (x_kplus1) -- node[above,sloped] {} (x3);
  \draw [-] (x_kplus1) -- node[above,sloped] {} (x_kminus1);
  \draw [-] (x_kplus2) -- node[above,sloped] {} (x_kminus1);
  \draw [-] (x_kplus2) -- node[above,sloped] {} (x_k);
  \draw [-] (x_kplus2) -- node[above,sloped] {} (x3);
  \draw [-] (x_kplus3) -- node[above,sloped] {} (x_kminus1);
  \draw [-] (x_kplus3) -- node[above,sloped] {} (x_k);
  \draw [-] (x_2kminus1) -- node[above,sloped] {} (x_k);
  \node at (360bp,200bp) {Clique on $\{x_{k+1},...,x_{2k}\}$};
\draw[line width=1pt, draw = black] (175bp,203bp) ellipse(120bp and 25bp) node{};
  \node at (355bp,110bp) {Clique on $\{x_1,...,x_k\}$};
  \draw[line width=1pt, draw = black] (175bp,108bp) ellipse(120bp and 25bp) node{};
  \end{tikzpicture}}
  \end{center}
  \caption{The graphs $A_{k}$ have location-domination number
    half their order.}
\label{fig:A_k}
\end{figure}

\begin{prop}\label{prop:A_k}
For any $k\ge  2$, $\gL(A_k) = k = n(A_k)/2$.
\end{prop}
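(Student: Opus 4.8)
The plan is to prove the two bounds $\gL(A_k)\le k$ and $\gL(A_k)\ge k$ separately, throughout exploiting that $A_k$ is co-bipartite: the sets $L=\{x_1,\dots,x_k\}$ and $U=\{x_{k+1},\dots,x_{2k}\}$ are cliques, and a vertex $x_i\in L$ is adjacent to a vertex $x_j\in U$ precisely when $j\le i+k-1$.

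For the upper bound I would simply exhibit the set $D=\{x_1,\dots,x_{k-1}\}\cup\{x_{2k}\}$ of size $k$ (the darkened vertices of Figure~\ref{fig:A_k}) and compute the code $N(v)\cap D$ of each of the $k$ vertices $v\notin D$. A short calculation gives that $x_k$ has code $\{x_1,\dots,x_{k-1}\}$, while for $1\le j\le k-1$ the vertex $x_{k+j}$ has code $\{x_{j+1},\dots,x_{k-1}\}\cup\{x_{2k}\}$. All these codes are nonempty, and they are pairwise distinct since the codes of the $U$-vertices form a strictly decreasing chain all containing $x_{2k}$ whereas the code of $x_k$ is the only one missing $x_{2k}$. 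Hence $D$ is locating-dominating and $\gL(A_k)\le k$.

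The lower bound is the substantive part. The key observation is that any two vertices $u,v\in L\setminus D$ have identical neighbourhoods inside $L\cap D$ (they lie in a common clique), so they can only be distinguished by $U\cap D$; symmetrically for pairs in $U\setminus D$. Ordering the vertices of $L\setminus D$ by increasing index, their codes inside $U\cap D$ are exactly the sets of $U\cap D$-vertices of index at most $i+k-1$, which form a nested family; distinctness forces this family to be strictly increasing, and therefore $|U\cap D|\ge |L\setminus D|-1$. The mirror argument gives $|L\cap D|\ge |U\setminus D|-1$. Writing $p=|L\cap D|$ and $q=|U\cap D|$, both inequalities rearrange to $p+q\ge k-1$, i.e.\ $|D|\ge k-1$.

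The main obstacle is the off-by-one: I must rule out $|D|=k-1$. I would argue that if $|D|=k-1$ then both displayed inequalities are tight, which forces the $U\cap D$-vertices to occupy ``one per gap'' of the thresholds; in particular no vertex of $U\cap D$ lies above the largest threshold coming from $L\setminus D$, and no vertex of $L\cap D$ lies below the smallest threshold coming from $U\setminus D$. Tracing through the codes, this makes the largest-index vertex of $L\setminus D$ adjacent to every vertex of $D$, and likewise the smallest-index vertex of $U\setminus D$ adjacent to every vertex of $D$; these two distinct vertices then share the code $D$, contradicting the locating property. The degenerate situations $L\subseteq D$ or $U\subseteq D$ give $|D|\ge k$ at once, and the cases $|L\setminus D|\le 1$ (where the chain argument is vacuous) reduce to the same ``two vertices with full code'' contradiction. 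Combining these, $|D|\ge k$, so $\gL(A_k)=k=n(A_k)/2$.
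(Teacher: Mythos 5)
Your proposal is correct and takes essentially the same approach as the paper: the same explicit set $\{x_1,\dots,x_{k-1}\}\cup\{x_{2k}\}$ for the upper bound, and for the lower bound the same two structural facts (the clique structure forces the codes on each side to form a nested, hence strictly increasing, chain, and at most one vertex can have code equal to all of $D$). The only difference is bookkeeping at the end: the paper merges both counts into the single inequality $2k-|D|\le (a+1)+(b+1)-1$ and gets $|D|\ge k$ at once by integrality, whereas you first obtain $|D|\ge k-1$ and then rule out equality by a tightness analysis that rediscovers the ``two vertices with full code'' contradiction.
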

\begin{proof}
The set $\{x_1,\ldots,x_{k-1}\}\cup\{x_{2k}\}$ is a locating-dominating set of $A_k$, and so $\gL(A_k) \le k$.
For the other direction, let $D$ be a locating-dominating set of $A_k$, and let $A=\{x_1,\ldots,x_k\}$ and
$B=\{x_{k+1},\ldots,x_{2k}\}$. Let $D_A = D \cap A$ and let $D_B = D \cap B$. Further, let $a = |D_A|$ and let $b = |D_B|$. In order to dominate the vertex $x_1$ (respectively, $x_{2k}$), we note that $D_A \ne \emptyset$ (respectively, $D_B \ne \emptyset$).
For every two vertices $x_i$ and $x_j$ in $A$ with $i<j$, we have $N(x_i)\subseteq N(x_j)$. Since $A$ is a clique, every vertex of $A$ is dominated by each vertex of $D_A$. Further, if $v \in A$, then either $N(v) \cap B = \emptyset$ or $v$ is adjacent to consecutive vertices of $D_B$ in the sense that if $x_j$ is a vertex in $D_B$ of largest subscript adjacent to $v$, then $v$ is adjacent to all vertices $x_\ell \in D_B$ with $\ell \le j$. Hence, the vertices in $A$ have at most $b+1$ distinct neighborhoods in $D_B$ (including the possibility of a vertex in $A$ having an empty neighborhood in $D_B$).
By a similar argument, the vertices in $B$ have at most $a+1$ distinct neighborhoods in $D_A$. However, there can be only one vertex in $V(A_k) \setminus D$ which is dominated by every vertex in $D$, implying that
\[
2k - |D| = |V(A_k)\setminus D|\le  (a+1)+(b+1)-1=a+b+1=|D|+1,
\]
or, equivalently, $2k \le 2|D|+1$, and so $|D| \ge \lceil\frac{2k-1}{2}\rceil = k$. Since $D$ is an arbitrary locating-dominating set of $A_k$, this implies that $\gL(A_k) \ge k$. Consequently, $\gL(A_k) = k = n(A_k)/2$.
\end{proof}

Similar as in~\cite{fgknpv11}, we will show how to combine the graphs of Definition~\ref{def:Ak} to obtain more extremal examples. If the cardinality of a minimum locating set in a graph $G$ is equal to $\gL(G)$, then we say that $G$ is \emph{combinable}. Given two graphs $G$ and $H$, the \emph{complete join} of $G$ and $H$, abbreviated $G\bowtie H$, is the graph obtained from the disjoint union of $G$ and $H$ by adding all the edges $uv$ with $u\in V(G)$ and $v\in V(H)$. The complete join of a set $\{G_1,\ldots,G_k\}$ of more than two graphs is the graph obtained from the disjoint union of $G_1,\ldots, G_k$ by adding all the edges $uv$ with $u\in V(G_i)$ and $v\in V(G_j)$, for $1\leq i<j\leq k$. 

\begin{lem}\label{lem:combi}
If $G$ and $H$ are two combinable graphs, then $\gL(G\bowtie H) \ge \gL(G)+\gL(H)$.
\end{lem}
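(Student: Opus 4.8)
The plan is to take a minimum locating-dominating set $D$ of the complete join $G \bowtie H$ and decompose it as $D_G = D \cap V(G)$ and $D_H = D \cap V(H)$, arguing that $D_G$ must be a locating set of $G$ and $D_H$ must be a locating set of $H$. If both of these hold, then since $G$ and $H$ are combinable we have $|D_G| \ge \gL(G)$ and $|D_H| \ge \gL(H)$, whence $\gL(G \bowtie H) = |D| = |D_G| + |D_H| \ge \gL(G) + \gL(H)$, as desired. So the task reduces to showing that the restriction of $D$ to each factor is locating in that factor.

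The crucial observation is that the join edges make $D$-neighborhoods behave almost independently across the two factors. For a vertex $u \in V(G) \setminus D_G$, its neighborhood in $D$ is $N_{G \bowtie H}(u) \cap D = (N_G(u) \cap D_G) \cup D_H$, because $u$ is joined to \emph{every} vertex of $H$ and hence to all of $D_H$. Thus for two distinct vertices $u, v \in V(G) \setminus D_G$, the set $D_H$ cancels out and $N_{G \bowtie H}(u) \cap D \ne N_{G \bowtie H}(v) \cap D$ holds if and only if $N_G(u) \cap D_G \ne N_G(v) \cap D_G$. Since $D$ locates $u$ and $v$ in $G \bowtie H$, we conclude that $D_G$ locates them in $G$, so $D_G$ is a locating set of $G$; the symmetric argument gives that $D_H$ is a locating set of $H$.

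First I would set up this decomposition and carry out the neighborhood computation above, which is the heart of the argument. The main subtlety — and the step I expect to require the most care — is that a locating set, unlike a locating-dominating set, is permitted one undominated vertex, so I must confirm that the restrictions $D_G$ and $D_H$ genuinely qualify as \emph{locating} sets (where that single exceptional vertex is allowed) rather than needing to be fully dominating within each factor. The definition in the excerpt tells us the only difference between a locating set and a locating-dominating set is that a locating set may have one non-dominated vertex, so it suffices to verify the locating condition, which the cancellation argument delivers directly. I would also note that combinability of $G$ and $H$ is exactly the hypothesis that lets us pass from ``$D_G$ is a locating set'' to the bound $|D_G| \ge \gL(G)$, since by definition the minimum size of a locating set in a combinable graph equals its location-domination number. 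Assembling these pieces yields the claimed inequality.
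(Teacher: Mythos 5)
Your proposal is correct and follows essentially the same approach as the paper: both decompose a locating-dominating set $D$ of $G \bowtie H$ into $D \cap V(G)$ and $D \cap V(H)$, observe that join edges prevent vertices of one factor from locating pairs in the other (your explicit neighborhood cancellation $N_{G\bowtie H}(u) \cap D = (N_G(u)\cap D_G) \cup D_H$ is just a more detailed rendering of this), and then invoke combinability to lower-bound each part by the respective location-domination number. Your attention to why the restrictions need only be locating sets rather than locating-dominating sets is exactly the point the definition of combinable is designed to handle, and the paper uses it the same way.
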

\begin{proof}
Note that a vertex in $V(G)$ cannot locate any pair in $V(H)$ (and vice-versa), however it may help to dominate some vertex. Therefore, for any locating-dominating set $D$ of $G\bowtie H$, $D\cap V(G)$ must be a locating set of $G$, and $D\cap V(H)$, a locating set of $H$. Then, by the definition of a combinable graph, $|D\cap V(G)|\ge \gL(G)$ and $|D\cap V(H)|\ge \gL(H)$, which completes the proof.
\end{proof}

\begin{thm}
Let $\cA$ be a set of vertex-disjoint graphs where each member of $\cA$ is isomorphic to some graph $A_{k}$ ($k \ge  2$). Let $G(\cA)$ be the complete join of all members in $\cA$. Then, $G$ has location-domination number half its order.
\end{thm}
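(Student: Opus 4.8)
The plan is to prove the claimed lower and upper bounds separately, exploiting the combinability of each $A_k$ together with Lemma~\ref{lem:combi}, and the structural regularity of the complete join. Write $\cA = \{A^{(1)}, \ldots, A^{(m)}\}$ where $A^{(i)} \cong A_{k_i}$, so that $G(\cA)$ has order $n = \sum_{i=1}^m 2k_i$. Since each $A_{k_i}$ is co-bipartite (each is a join of two cliques), the complete join $G(\cA)$ is itself co-bipartite, and in particular it is twin-free: I would first verify this, since the conjecture and the whole discussion only concern twin-free graphs. Twin-freeness should follow because adding a universal set of edges between the parts preserves the distinctness of neighborhoods already guaranteed within each $A_{k_i}$ (two vertices in different parts get the same new neighbors, so only same-part pairs need checking, and those are already distinguished inside their own $A_{k_i}$).

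For the lower bound $\gL(G(\cA)) \ge n/2$, the key observation is that the argument of Lemma~\ref{lem:combi} extends from two graphs to the complete join of arbitrarily many graphs: a vertex lying in one member $A^{(i)}$ cannot locate any pair of vertices lying in a different member $A^{(j)}$, since such a vertex is adjacent to \emph{both} members of that pair (or to neither, but here it is adjacent to both as the join is complete). Hence for any locating-dominating set $D$ of $G(\cA)$, the restriction $D \cap V(A^{(i)})$ must be a locating set of $A^{(i)}$ for each $i$. I would therefore first establish that each $A_k$ is \emph{combinable}, i.e.\ that its minimum locating set has size $\gL(A_k) = k$; the proof of Proposition~\ref{prop:A_k} already shows $\gL(A_k) = k$, and the same counting argument (the inequality $2k - |D| \le (a+1)+(b+1)-1$) in fact bounds locating sets, which by definition allow one extra undominated vertex, giving the same bound $|D| \ge k$. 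Summing $|D \cap V(A^{(i)})| \ge \gL(A_{k_i}) = k_i$ over all $i$ yields $|D| \ge \sum_i k_i = n/2$.

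For the upper bound, I would exhibit an explicit locating-dominating set of size $n/2$. Inside each $A_{k_i}$, the set $\{x_1, \ldots, x_{k_i-1}\} \cup \{x_{2k_i}\}$ is a locating set of size $k_i$ (this is the set used in Proposition~\ref{prop:A_k}). Taking the union $D$ of these sets across all members gives $|D| = \sum_i k_i = n/2$. It remains to check that this union is actually locating-\emph{dominating} in the join, not merely locating within each piece. Domination is immediate and in fact easy, since in the complete join every vertex of $A^{(i)}$ is adjacent to all of $D \setminus V(A^{(i)})$, so any chosen vertex from another member dominates it. For the locating property, two vertices $u, v \notin D$ in the \emph{same} member $A^{(i)}$ are distinguished by $D \cap V(A^{(i)})$ (since that set locates them in $A^{(i)}$, and the extra join-neighbors in $D$ are identical for $u$ and $v$ and so do not interfere); two vertices $u \in A^{(i)}$, $v \in A^{(j)}$ with $i \ne j$ are distinguished because $u$ is adjacent to all of $D \cap V(A^{(j)}) \ne \emptyset$ while $v$ is not adjacent to itself's member's private part in the same way --- more precisely, $u$'s neighborhood in $D$ contains all of $D\cap V(A^{(j)})$, whereas $v$'s neighborhood in $D$ contains at most the proper locating subset $D \cap V(A^{(j)})$ minus $v$, so their $D$-neighborhoods differ.

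The step I expect to be the main obstacle is the cross-member locating verification in the upper bound, specifically making precise why $u \in A^{(i)}$ and $v \in A^{(j)}$ ($i \ne j$) always receive distinct neighborhoods in $D$. The clean way to see it is that $N(u) \cap D \supseteq D \cap V(A^{(j)})$ (all of the $j$-th member's chosen vertices, via join edges), while $N(v) \cap D \cap V(A^{(j)}) = N_{A^{(j)}}(v) \cap D$ is a \emph{proper} subset of $D \cap V(A^{(j)})$ whenever $v$ misses some chosen vertex of its own member --- and one must confirm this gap is genuine, i.e.\ that no non-$D$ vertex of $A^{(j)}$ is adjacent to every vertex of $D \cap V(A^{(j)})$, which is exactly the content of the locating-set analysis inside $A_{k_j}$. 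I would handle this by reducing both the "same member" and "different member" cases to the already-proved fact that $D \cap V(A^{(i)})$ is a locating set of $A^{(i)}$, observing that the universally-shared join-neighbors cancel out in every neighborhood comparison. Finally, combining the two bounds gives $\gL(G(\cA)) = n/2$, so $G(\cA)$ has location-domination number exactly half its order, as claimed.
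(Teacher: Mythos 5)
Your proof is correct in substance and rests on the same two core ingredients as the paper's --- the combinability of each $A_k$ for the lower bound, and the union of the canonical sets $\{x_1,\ldots,x_{k_i-1}\}\cup\{x_{2k_i}\}$ for the upper bound --- but you organize them genuinely differently. The paper proceeds by induction on $|\cA|$: it applies Lemma~\ref{lem:combi} to the two-part join $G(\cA\setminus A_i)\bowtie G(A_i)$, and to make the induction go through it must carry three invariants along (each composite graph is combinable, has location-domination number half its order, and admits a minimum locating-dominating set in which no vertex is dominated by every vertex of the set). Your argument is direct: a vertex of one member is adjacent to both vertices of any pair in another member, so for any locating-dominating set $D$ of $G(\cA)$ each restriction $D\cap V(A^{(i)})$ must be a locating set of $A^{(i)}$, and summing $|D\cap V(A^{(i)})|\ge k_i$ over the members gives the lower bound in one stroke, with no need to establish combinability of the composite joins themselves. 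That is a real simplification of the paper's bookkeeping; both arguments ultimately rest on the same fact, namely that the counting in Proposition~\ref{prop:A_k} applies verbatim to locating sets of $A_k$, not just to locating-dominating ones.

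One step needs repair, however. For the cross-member case of the upper bound you assert that the required fact --- no vertex of $A^{(j)}$ outside $D_j=\{x_1,\ldots,x_{k_j-1}\}\cup\{x_{2k_j}\}$ is adjacent to \emph{every} vertex of $D_j$ --- ``is exactly the content of the locating-set analysis inside $A_{k_j}$''. It is not: the locating property of $D_j$ only guarantees that \emph{at most one} vertex outside $D_j$ is adjacent to all of $D_j$, and even one such vertex per member is fatal, since two vertices lying in different members, each adjacent to all of its own member's chosen set, would both have $D$-neighborhood equal to all of $D$ and so would not be located. This is precisely why the paper carries the stronger property (``no vertex is dominated by every vertex in the set'') as an explicit invariant of its induction rather than deducing it from the locating property. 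Fortunately the stronger property does hold for your specific sets, and the check is immediate: $x_{k_j}$ is not adjacent to $x_{2k_j}$ (their indices differ by $k_j>k_j-1$), and each $x_{k_j+\ell}$ with $1\le \ell\le k_j-1$ is not adjacent to $x_1$. With that two-line verification inserted in place of the appeal to the locating-set analysis, your proof is complete.
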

\begin{proof} We use induction on the size of $\cA$, by proving the following claim: every graph $G(\cA)$ is combinable, has location-domination number half its order, and there is a minimum locating-dominating set where no vertex is dominated by every vertex in the set. The proof of Proposition~\ref{prop:A_k} shows that for $k\ge  2$, the graph $A_k$ is combinable, has location-domination number half its order, and there is a minimum locating-dominating set (namely, the set $\{x_1,\ldots,x_{k-1}\}\cup\{x_{2k}\}$) where no vertex is dominated by every vertex in the set. Hence if $|\cA|=1$, we are done. This establishes the base case.

Now, assume $|\cA|>1$. Let $G=G(\cA)$, and let $A_i$ be some member of $\cA$. By induction, the claim is true for $G_1=G(\cA\setminus A_i)$ and for $G_2=G(A_i)$. Hence by Lemma~\ref{lem:combi}, $\gL(G)\ge  \gL(G_1)+\gL(G_2)=\frac{|V(G)|}{2}$. Moreover, consider two minimum locating-dominating sets $D_1$ of $G_1$ and $D_2$ of $G_2$, where no vertex of $V(G_i) \setminus D_i$ is dominated by every vertex of $D_i$ in $G_i$ for $i \in \{1,2\}$. Then, $D=D_1\cup D_2$ is a dominating set and there is no vertex of $G$ dominated by every vertex of $D$. All vertex pairs within one of the two subgraphs are located, and finally, each pair $u,v$ with $u\in V(G_1)$ and $v\in V(G_2)$ is located by the vertex of $D_1$ that does not dominate $u$. It remains to show that $G$ is combinable. Let $L$ be a minimum locating set of $G$. By a similar argument as in the proof of Lemma~\ref{lem:combi}, $L\cap V(G_1)$ must be a locating set of $G_1$ and $L\cap V(G_2)$ must be a locating set of $G_2$. Then by induction we know that $|L\cap V(G_1)|\ge  \gL(G_1)$ and $|L\cap V(G_2)|\ge  \gL(G_2)$, implying that $\gL(G) \le \gL(G_1)+\gL(G_2)\le  |L|$. Since also $|L|\le  \gL(G)$, we have equality and $G$ is combinable.
\end{proof}

\subsection{A family of twin-free graphs with large domination number and location-domination number half the order}

Let $G$ be a graph with $\gL(G) = \frac{|V(G)|}{2}$. If $G$ contains a vertex $x$ such that, when we identify $x$ with a vertex of some other graph $H$ that is vertex-disjoint from $G$, every locating-dominating set of the resulting graph contains at least half of the vertices of $G$, then we say that $G$ is \emph{attachable} and $x$ is a \emph{link vertex} of $G$. Examples of attachable graphs are paths on two vertices, as well as any graph obtained from a star where one edge is subdivided twice, and every other edge is subdivided once (the link vertex is the center of the star). See Figure~\ref{fig:attach} for an illustration, where the darkened vertices form a minimum locating-dominating set of the graph.

\begin{figure}[htb]
\tikzstyle{every node}=[circle, draw, fill=black!0, inner sep=0pt,minimum width=.16cm]
\begin{center}
\begin{tikzpicture}[thick,scale=.6]
  \draw(0,0) { 
    +(0.00,3.50) -- +(0.00,2.00)
    +(2.00,3.50) -- +(1.50,2.00)
    +(1.50,2.00) -- +(1.50,1.00)
    +(1.50,1.00) -- +(1.50,0.00)
    +(2.50,1.00) -- +(2.50,2.00)
    +(2.50,2.00) -- +(2.00,3.50)
    +(4.00,3.50) -- +(4.00,2.00)
    +(5.50,3.50) -- +(5.50,2.00)
    +(5.50,2.00) -- +(5.50,1.00)
    +(5.50,1.00) -- +(5.50,0.00)
    +(7.00,0.00) -- +(7.00,1.00)
    +(7.00,2.00) -- +(7.00,1.00)
    +(7.00,2.00) -- +(8.00,3.50)
    +(8.00,3.50) -- +(7.50,2.00)
    +(7.50,2.00) -- +(7.50,1.00)
    +(8.00,1.00) -- +(8.00,2.00)
    +(8.00,2.00) -- +(8.00,3.50)
    +(8.50,2.00) -- +(8.50,1.00)
    +(8.50,2.00) -- +(8.00,3.50)
    +(8.00,3.50) -- +(9.00,2.00)
    +(9.00,2.00) -- +(9.00,1.00)
    +(9.00,1.00) node{}
    +(9.00,2.00) node[fill=black!100]{}
    +(8.50,2.00) node[fill=black!100]{}
    +(8.50,1.00) node{}
    +(8.00,1.00) node{}
    +(8.00,2.00) node[fill=black!100]{}
    +(7.50,2.00) node[fill=black!100]{}
    +(7.50,1.00) node{}
    +(8.00,3.50) node[fill=black!100]{}
    +(7.00,2.00) node{}
    +(7.00,1.00) node[fill=black!100]{}
    +(7.00,0.00) node{}
    +(5.50,2.00) node{}
    +(5.50,3.50) node[fill=black!100]{}
    +(5.50,1.00) node[fill=black!100]{}
    +(5.50,0.00) node{}
    +(4.00,2.00) node{}
    +(4.00,3.50) node[fill=black!100]{}
    +(2.50,2.00) node{}
    +(2.50,1.00) node[fill=black!100]{}
    +(2.00,3.50) node[fill=black!100]{}
    +(1.50,2.00) node{}
    +(1.50,1.00) node[fill=black!100]{}
    +(1.50,0.00) node{}
    +(0.00,3.50) node[fill=black!100]{}
    +(0.00,2.00) node{}
    +(3,4) node[rectangle, draw=white!0, fill=white!100]{$H$}   
  };
  \draw[line width=1pt] (4.1,3.6) ellipse(5.5cm and 1.0cm);

\end{tikzpicture}
\end{center}
\vskip -0.6 cm \caption{A graph with location-domination number half its order obtained from a set of attachable graphs.} \label{fig:attach}
\end{figure}
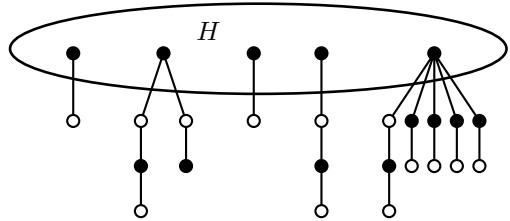

As an immediate consequence of this definition, we have the following observation.

\begin{ob}\label{obm:hang}
If $G$ is a graph obtained from the disjoint union of a graph $H$ and $|V(H)|$ disjoint attachable graphs, by identifying each vertex of $H$ with a link vertex of one of the attachable graphs, then $\gL(G)\ge \frac{|V(G)|}{2}$.
\end{ob}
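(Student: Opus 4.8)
The plan is to partition $V(G)$ according to the attachable graphs and to derive the lower bound locally, one attachable graph at a time, then sum. Write $G^{(1)},\dots,G^{(m)}$ for the $m=|V(H)|$ attachable graphs used in the construction, and let $x_i$ denote the link vertex of $G^{(i)}$, which is identified with a vertex $v_i$ of $H$. Since the attachable graphs are pairwise vertex-disjoint and each is glued to $H$ only through its single link vertex, the vertex set $V(G)$ is partitioned as $V(G)=\bigsqcup_{i=1}^m V_i$, where $V_i=V(G^{(i)})$ and the only vertex of $V_i$ lying in $H$ is $v_i=x_i$. In particular every edge of $G$ incident with a non-link vertex of $G^{(i)}$ stays inside $V_i$, and $|V(G)|=\sum_{i=1}^m |V_i|$.

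First I would fix an arbitrary locating-dominating set $D$ of $G$ and, for each $i$, view $G$ as being built from $G^{(i)}$ by attaching it at its link vertex. Concretely, set $H_i' = G - (V_i\setminus\{v_i\})$, the subgraph induced on all vertices except the non-link vertices of $G^{(i)}$; this graph contains $v_i$. Because no non-link vertex of $G^{(i)}$ has a neighbour outside $V_i$, the graph $G$ is exactly the graph obtained from $G^{(i)}$ and $H_i'$ by identifying the link vertex $x_i$ with the vertex $v_i$ of $H_i'$.

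The crux is then to invoke the defining property of an attachable graph for each $i$: since $G^{(i)}$ is attachable with link vertex $x_i$, every locating-dominating set of the graph obtained by identifying $x_i$ with a vertex of another vertex-disjoint graph contains at least half of $V(G^{(i)})$. Applying this with the attached graph taken to be $H_i'$ gives $|D\cap V_i| \ge \frac{|V_i|}{2}$. Summing over $i$ and using the partition together with $\gL(G^{(i)}) = \frac{|V_i|}{2}$ (so each $|V_i|$ is even and the halves are integers), I obtain
\[
|D| = \sum_{i=1}^m |D\cap V_i| \ge \sum_{i=1}^m \frac{|V_i|}{2} = \frac{|V(G)|}{2},
\]
and since $D$ was arbitrary this yields $\gL(G)\ge \frac{|V(G)|}{2}$.

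The step I expect to be the main obstacle — really the only nontrivial point — is the legitimacy of applying the attachable property with $H_i'$ as the attached graph. This requires reading the definition of \emph{attachable} as a universal statement, namely that the at-least-half guarantee holds whenever $x_i$ is identified with a vertex of \emph{any} vertex-disjoint graph, and then checking that $G$ genuinely arises from $G^{(i)}$ by a single such identification, i.e. that $H_i'$ interacts with $G^{(i)}$ only through $x_i=v_i$. Once this is justified by the vertex-disjointness of the attachable graphs and the fact that they meet the rest of $G$ only at their link vertices, the bound follows immediately by summation, with no rounding issues since each $|V_i|$ is even.
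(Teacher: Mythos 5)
Your proof is correct and matches the paper's intent exactly: the paper states this as an immediate consequence of the definition of attachable graphs, and your argument simply fleshes out that intended reasoning — partition $V(G)$ into the vertex sets of the attachable graphs, apply the (universal) attachable property to each one with the rest of $G$ playing the role of the attached graph, and sum the resulting bounds $|D\cap V_i|\ge |V_i|/2$. Your care in verifying that $G$ really arises from $G^{(i)}$ by a single identification with a vertex-disjoint graph is exactly the point the paper leaves implicit.
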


We note that in Observation~\ref{obm:hang}
if $H$ is a graph without isolated vertices, then $G$ is twin-free.
Hence, $G$ is extremal with respect to the bound of Conjecture~\ref{conj}.

\subsection{Extremal trees}\label{sec:trees}

Recall that by Proposition~\ref{prop:VC}, Conjecture~\ref{conj} holds for bipartite graphs (and hence trees). We now characterize all trees that are extremal with respect to the bound of Conjecture~\ref{conj}.

\begin{defin}\label{def:T}
Let $\cT$ be the family of trees $T$ satisfying the following properties: \1 \\
\indent $\bullet$ $T$ has a perfect matching $M$. \1 \\
\indent $\bullet$ Each edge of $M$ has one end colored  white and the other end colored black. \1 \\
\indent $\bullet$ Each white vertex is either a leaf, or has degree~2 and is adjacent to a black vertex that \\ \hspace*{0.55cm} has a white leaf as a neighbor.
\end{defin}

A tree from Definition~\ref{def:T} is illustrated in Figure~\ref{fig:extr-tree}, where the thick edges belong to the matching.

\begin{figure}[ht]
\centering
  \scalebox{1.0}{\begin{tikzpicture}[join=bevel,inner sep=0.6mm,line width=0.8pt, scale=0.4]

\path (0,0) node[draw, shape=circle,fill] (b1) {};
\path (b1)+(-2,0) node[draw, shape=circle] (w1) {};

\path (b1)+(0,1.5) node[draw, shape=circle,fill] (b2) {};
\path (b2)+(-2,0) node[draw, shape=circle] (w2) {};

\path (b1)+(2,1.5) node[draw, shape=circle,fill] (b3) {};
\path (b3)+(2,0) node[draw, shape=circle] (w3) {};

\path (b1)+(2,0) node[draw, shape=circle] (w4) {};
\path (w4)+(2,0) node[draw, shape=circle,fill] (b4) {};

\path (b1)+(2,-1.5) node[draw, shape=circle] (w5) {};
\path (w5)+(2,0) node[draw, shape=circle,fill] (b5) {};

\path (w3)+(2,0.75) node[draw, shape=circle,fill] (b6) {};
\path (b6)+(2,0) node[draw, shape=circle] (w6) {};

\path (b6)+(0,-1.5) node[draw, shape=circle,fill] (b7) {};
\path (b7)+(2,0) node[draw, shape=circle] (w7) {};

\path (b5)+(2,0.75) node[draw, shape=circle,fill] (b8) {};
\path (b8)+(2,0) node[draw, shape=circle] (w8) {};

\path (b5)+(2,-0.75) node[draw, shape=circle,fill] (b9) {};
\path (b9)+(2,0) node[draw, shape=circle] (w9) {};

\path (w9)+(2,0) node[draw, shape=circle,fill] (b10) {};
\path (b10)+(2,0) node[draw, shape=circle] (w10) {};

\draw[line width=2.5pt] (b1)--(w1) (b2)--(w2) (b3)--(w3) (b4)--(w4) (b5)--(w5)
                        (b6)--(w6) (b7)--(w7) (b8)--(w8) (b9)--(w9) (b10)--(w10);

\draw[line width=0.5pt] (b1)--(b2) (b1)--(b3) (b1)--(w4) (b1)--(w5)
                        (b6)--(w3) (b6)--(b7) (b8)--(b5)--(b9) (w9)--(b10);

  \end{tikzpicture}}
  \caption{A tree satisfying Definition~\ref{def:T}.}
  \label{fig:extr-tree}
\end{figure}
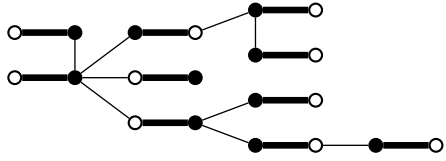

\medskip
\begin{prop}\label{prop:trees}
Every tree $T\in\cT$ with order $n$ satisfies $\gL(T)=\frac{n}{2}$.
\end{prop}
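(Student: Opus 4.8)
The plan is to prove the two inequalities $\gL(T)\le \frac{n}{2}$ and $\gL(T)\ge \frac{n}{2}$ separately. The upper bound is immediate from Proposition~\ref{prop:VC}: the set $B$ of all black vertices is a vertex cover of $T$ (every edge of the perfect matching $M$ has a black endpoint, and I would check that the non-matching edges are also covered by $B$, using that by Definition~\ref{def:T} each white vertex of degree~$2$ has its non-matching neighbor black), so $B$ is a locating-dominating set. Since $M$ is a perfect matching, $|B|=\frac{n}{2}$, giving $\gL(T)\le\frac{n}{2}$. The real work is the lower bound, and since $T$ is bipartite the conjecture's bound cannot be beaten; what remains is to show it is attained.

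For the lower bound I would show that every locating-dominating set $D$ of $T$ satisfies $|D|\ge\frac{n}{2}$ by a discharging or counting argument over the edges of $M$. The natural idea is to prove that $D$ must contain at least one endpoint of each matching edge, which would immediately give $|D|\ge |M|=\frac{n}{2}$. This is plausible for edges $bw$ where $w$ is a white leaf: if neither $b$ nor $w$ lies in $D$, then $w$ is a leaf outside $D$ dominated only through $b$, and one must locate $w$ from the other vertices dominated by $b$ — the structure (a black vertex $b$ whose private white leaf must be distinguished) is exactly what forces a vertex into $D$. For the degree-$2$ white vertices $w$ (adjacent to a black $b$ that itself carries a white leaf), I would use the third bullet of Definition~\ref{def:T}: the configuration around $b$ forces enough of $\{b,w,\text{leaf}\}$ into $D$ to pay for the matching edges it meets.

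The key step — and the one I expect to be the main obstacle — is turning the local twin-free/locating constraints into a clean global count showing $D$ hits every matching edge. The danger is double-counting: a single vertex of $D$ might be charged to two matching edges at once, or a matching edge might be "paid for" only jointly with a neighboring edge. To handle this cleanly I would argue by induction on the number of matching edges, or equivalently root the tree and peel off a deepest "gadget" (a black vertex with its white leaf together with an attached degree-$2$ white vertex and its partner, matching the third bullet of the definition), showing that any locating-dominating set must spend at least two vertices inside each such constant-size gadget while the domination/location requirements elsewhere are undisturbed when the gadget is removed. The inductive hypothesis then yields $\frac{n}{2}$ on the remainder, and the gadget contributes its own two vertices for its two matching edges. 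Verifying that the removal does not create new unlocated pairs or isolated vertices in the smaller tree — so that the induction hypothesis genuinely applies — is the delicate bookkeeping at the heart of the argument.
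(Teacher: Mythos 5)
Your upper bound is fine and matches the paper's. The lower bound, however, has a genuine gap, and it begins with the claim you call the ``natural idea'': that every locating-dominating set $D$ must contain an endpoint of each matching edge. This is false for trees in $\cT$. Consider the tree $T$ on vertices $w,b,u,v,b_2,w_2,b_3,w_3$ with edges $wb$, $bu$, $uv$, $vb_2$, $b_2w_2$, $vb_3$, $b_3w_3$, perfect matching $M=\{wb,\,uv,\,b_2w_2,\,b_3w_3\}$, black vertices $b,v,b_2,b_3$ and white vertices $w,u,w_2,w_3$; one checks $T\in\cT$ (the only non-leaf white vertex is $u$, which has degree $2$ and is adjacent to the black vertex $b$ carrying the white leaf $w$). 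The set $D=\{b,w,b_2,b_3\}$ is a locating-dominating set of size $n/2=4$: the traces of $u,v,w_2,w_3$ on $D$ are $\{b\}$, $\{b_2,b_3\}$, $\{b_2\}$, $\{b_3\}$, all nonempty and pairwise distinct. Yet $D$ contains neither endpoint of the matching edge $uv$. So no argument establishing that $D$ is a transversal of $M$ can succeed.

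The paper's proof circumvents exactly this difficulty by constructing an injection $f$ from $M$ into $D$ rather than a transversal: for an edge $e=uv\in M$ (with $u$ white) missed entirely by $D$, it shows $u$ cannot be a leaf, so by Definition~\ref{def:T} the vertex $u$ has a black neighbor $b$ with a white leaf $w$; domination of $u$ forces $b\in D$, and location of the pair $u,w$ forces $w\in D$. Then $e$ is charged to $w$ while the edge $bw$ is charged to $b$, and injectivity is verified directly (two matching edges charged to the same leaf $w$ would produce two degree-$2$ white vertices whose only $D$-neighbor is $b$, hence unlocated). Your fallback plan --- induction by peeling off a deepest gadget --- is not carried out, and you yourself flag its key step (``any locating-dominating set must spend at least two vertices inside each gadget while the requirements elsewhere are undisturbed'') as unverified. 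That step is genuinely problematic: $D$ restricted to the smaller tree need not be locating-dominating there, since vertices outside the gadget may be dominated or located only through gadget vertices; moreover, some trees in $\cT$ contain no gadget of your kind at all (for instance coronas, where every white vertex is a leaf), so the peeling scheme is not even defined for them. As it stands, the lower bound is not proved.
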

\begin{proof}
The upper bound follows from Proposition~\ref{prop:VC}. In fact, one can check that the set of black vertices forms a locating-dominating set of $T$. For the lower bound, let $T \in \cT$ have order~$n$ with $M$ its perfect matching and a black-white coloring of $T$ as defined in Definition~\ref{def:T}. If $n = 2$, then the desired result is immediate. Hence, we may assume that $n \ge 4$. Let $D$ be an arbitrary locating-dominating set of $T$. If we are able to associate a distinct vertex, $f(e)$, of $D$ to each edge $e$ of $M$, then $|D| \ge |M| = \frac{n}{2}$ and we will be done. Let $e=uv \in M$. Renaming $u$ and $v$ if necessary, we may assume that $u$ is colored white in the black/white coloring of the vertices of $T$. If both $u$ and $v$ belong to $D$, we choose $f(e) = v$ (and note that $v$ is colored black and is therefore not a leaf in $T$). If exactly one of $u$ and $v$ belongs to $D$, we choose $f(e)$ to be the end of $e$ that belongs to $D$. Suppose that neither $u$ nor $v$ belongs to $D$. Recall that $u$ is colored white. If $u$ is a leaf, then $u$ would not be dominated by $D$, a contradiction. Hence, by Definition~\ref{def:T}, $d(u)=2$ and $u$ has a black neighbor $b$ adjacent to a white leaf $w$. Necessarily, $bw \in M$. In order to dominate $u$, the vertex $b$ must belong to $D$. If $w \notin D$, then both $u$ and $w$ are adjacent to $b$ but to no other vertex of $D$, and so $u$ and $w$ would not be located by $D$, a contradiction. Hence, $w$ must belong to $D$. Since both ends of the edge $bw$ belong to $D$ and $b$ is colored black, we chose $f(bw) = b$. We now choose $f(e) = w$. We claim that there is no edge $e' \in M$ other than $e$, with $f(e')=w$. Indeed, if this was the case, then the neighbor of $b$ in $e'$ would not be located from $u$, a contradiction. This completes the proof.
\end{proof}

\medskip
The following result characterizes all trees with location-domination number one-half their order.

\begin{thm}
Let $T$ be a tree of order $n\ge  2$ without any open twins. Then, $\gL(T)=\frac{n}{2}$ if and only if $T\in\cT$.
\end{thm}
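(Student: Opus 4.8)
The plan is to prove both directions, using Proposition~\ref{prop:trees} for the ``if'' direction and concentrating the work on the ``only if'' direction. For the forward implication, assume $T$ is a tree of order $n \ge 2$ with no open twins and $\gL(T) = \frac{n}{2}$; the goal is to exhibit a perfect matching $M$ and a black/white coloring witnessing $T \in \cT$. I would argue by induction on $n$, rooting $T$ and examining the structure near a deepest leaf. The natural engine here is a reduction step: identify a small configuration at the bottom of the tree, delete it to obtain a smaller tree $T'$, and relate $\gL(T')$ to $\gL(T)$. Since Proposition~\ref{prop:VC} guarantees $\gL(T) \le \frac{n}{2}$ for every tree, the extremality hypothesis $\gL(T) = \frac{n}{2}$ is equivalent to saying no locating-dominating set is smaller than half the order, and the reduction should be designed so that a non-extremal $T'$ (i.e. $\gL(T') < \frac{|V(T')|}{2}$) would let one build a too-small locating-dominating set of $T$, forcing $T'$ to be extremal as well and letting the inductive hypothesis apply.

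First I would handle small base cases ($n = 2$, and the short paths/stars) directly, checking which lie in $\cT$ and computing $\gL$ by hand; in particular a path $P_2$ is in $\cT$, while I expect certain small trees to fail both $\gL = \frac{n}{2}$ and membership in $\cT$ consistently. Next, for the inductive step, I would root $T$ at a leaf and take a vertex $x$ at maximum distance from the root whose children are all leaves; let $p$ be the parent of $x$. The no-open-twins hypothesis is crucial and restrictive here: two leaves sharing the same neighbor are open twins, so $x$ has \emph{at most one} leaf child, and more generally any two vertices both of whose neighborhoods are a single common vertex are forbidden. This severely limits the local structure, and I would enumerate the possible configurations of the bottom two levels (a support vertex with one pendant leaf, a support vertex that is itself a leaf's neighbor at the end of a short path, etc.). For each configuration I would define the deletion of a matched pair of vertices, verify that the resulting $T'$ still has no open twins and is again extremal, apply induction to get $T' \in \cT$, and then check that re-attaching the deleted pair — colored appropriately black and white — preserves all three defining properties of $\cT$ (perfect matching, proper two-coloring of matching edges, and the degree/adjacency condition on white vertices).

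The main obstacle I anticipate is the bookkeeping of the reduction: showing that each local configuration either directly places $T$ in $\cT$ via a clean deletion, or else produces a strictly smaller locating-dominating set of $T$ (contradicting extremality) when the configuration is ``bad.'' Concretely, when a white vertex of degree $2$ appears, Definition~\ref{def:T} demands its black neighbor carry a white leaf; if the local structure violates this, I must produce a locating-dominating set of size $< \frac{n}{2}$, which means carefully choosing which of the bottom vertices to include so that domination and location are both maintained while saving a vertex. Establishing that the hypothesis ``no open twins'' is exactly what is needed — and that it transfers to $T'$ under deletion — is the delicate point, since deleting vertices can create new open twins in $T'$ unless the deletion is chosen with care. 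I would therefore spend most of the effort verifying, for each configuration, that the reduced tree remains open-twin-free and extremal, so that the inductive hypothesis is genuinely applicable; the reassembly into $\cT$ should then follow mechanically from matching the deleted edge and coloring its ends to meet the three bullet conditions.
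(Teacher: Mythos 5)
Your plan is essentially the paper's own proof: the ``if'' direction is delegated to Proposition~\ref{prop:trees}, and the ``only if'' direction proceeds by induction on $n$, deleting the deepest leaf $u$ together with its degree-two support vertex $v$ (degree two forced by open-twin-freeness), forcing extremality of $T'=T-\{u,v\}$ by combining the upper bound $\gL(T')\le\frac{n-2}{2}$ from Proposition~\ref{prop:VC} with the observation that a locating-dominating set of $T'$ plus $v$ is one of $T$, then applying induction and extending or recoloring the black/white coloring of $T'$, deriving contradictions from explicitly constructed locating-dominating sets of size $\frac{n-2}{2}$ when the local structure is bad. The only differences are organizational---the paper performs one uniform deletion and puts all the case analysis into recoloring the inherited coloring of $T'$ (according to the color and degree of $v$'s remaining neighbor $w$), whereas you enumerate bottom configurations before deleting---and your explicit concern that $T'$ remain open-twin-free is a verification the paper itself silently omits, so if anything your plan is more careful on that point.
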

\begin{proof}
Proposition~\ref{prop:trees} shows that every member $T$ of $\cT$ with order $n$ satisfies $\gL(T)=\frac{n}{2}$.

For the other direction, we proceed by induction on the order $n \ge 2$ of a tree $T$ without any open twins satisfying $\gL(T)=\frac{n}{2}$. If $n=2$, then $T = K_2 \in \cT$ and the claim is clearly true. This establishes the base case. Let $n \ge 4$ be even and assume that if $T'$ is a tree of order~$n'<n$ without any open twins satisfying $\gL(T')=\frac{n'}{2}$, then $T' \in \cT$. Let $T$ be a tree of order $n \ge 4$ without any open twins satisfying $\gL(T)=\frac{n}{2}$ and consider a longest path in $T$ that connects some leaf $r$ and a
leaf $u$. Since $T$ has no open twins, the neighbor $v$ of $u$ has degree~$2$. Let $w$ be the other neighbor of $v$. Let $T' = T - \{u,v\}$ be the tree obtained from $T$ by removing the vertices $u$ and $v$. By Proposition~\ref{prop:VC}, we have $\gL(T')\le \frac{n-2}{2}$. Let $D'$ be a minimum locating-dominating set of $T'$. Then the set $D' \cup \{v\}$ is a locating-dominating set of $T$, implying that $\frac{n}{2} = \gL(T) \le \gL(T') + 1 \le \frac{n-2}{2} + 1 = \frac{n}{2}$. Consequently, we must have equality throughout this inequality chain. In particular, $\gL(T') =  \frac{n-2}{2}$. Applying the inductive hypothesis to $T'$, we have $T'\in \cT$. Let $M'$ be the perfect matching of $T'$ and consider the associated black-white coloring, $\cC'$, of the vertices according to Definition~\ref{def:T}. Let $M = M' \cup \{uv\}$, and note that $M$ is a perfect matching of $T$.

If $w$ is colored black, then we extend the coloring $\cC'$ by coloring $v$ black and coloring $u$ white. Then, $T$ belongs to $\cT$, as desired. Hence we may assume that $w$ is colored white, for otherwise we are done.  We distinguish two cases.

\noindent {\bf Case 1: \boldmath{$w$} is a leaf in $T'$.} Let $x$ be its (black) neighbor in $T'$. If $w$ is the only white neighbor of $x$ in $T'$, then we extend the coloring $\cC'$ by coloring $v$ black and coloring $u$ white. Then, $T$ belongs to $\cT$. Hence, we may assume that $x$ has a white neighbor in $T'$ different from $w$. Let $a$ be such a neighbor of $x$. Since $T' \in \cT$, the vertex $a$ has degree~2 in $T'$. Let $b$ be the vertex matched to $a$ by
$M'$. We show that every neighbor of $b$ different from $a$ is colored black in $\cC'$. Suppose, to the contrary, that $b$ has a white neighbor $c$ different from $a$. Then, $c$
cannot be a leaf since $b$ is already matched to $a$ by $M'$. Hence, $c$ is matched to a black vertex, $d$ say, by $M'$. By definition, $d$ has to be adjacent to a white leaf. But then this leaf is not matched by $M'$ since its only neighbor, $d$, is already matched by $M'$ to $c$, a contradiction. Therefore, every neighbor of $b$ different from $a$ is colored black in $\cC'$.

If $b$ has at least two black neighbors, then the set consisting of all black vertices in $T'$ different from $b$, together with $v$ forms a locating-dominating set of $T$ (where $b$ is uniquely located by its black neighbors) of cardinality $\frac{n-2}{2}$, a contradiction. Therefore, $b$ has at most one black neighbor, implying that $d(b) \le 2$.

If $d(b)=1$, we can recolor $b$ white and $a$ black, and we can extend the resulting coloring to $T$ by coloring $v$ black and coloring $u$ white. In this case, $T$ clearly belongs to $\cT$. Hence we may assume that $d(b)=2$, for otherwise the desired result follows.
Let $c$ be the neighbor of $b$ distinct from $a$. By the earlier observations, $c$ is colored black. Let $d$ be the (white) neighbor of $c$ matched by $M$.  If $d$ is not a leaf, then let $e$ be the (black) neighbor of $d$ different from $c$. In this case, the set consisting of all black vertices in $T'$ different from $b$, together with $v$ forms a locating-dominating set of $T$ (where $d$ is uniquely located by $c$ and $e$, and $b$ is uniquely located by $c$) of cardinality $\frac{n-2}{2}$, a contradiction. Hence, $d$ is a leaf and we can recolor $a$ black and $b$ white and as before extend the resulting coloring to $T$ to show that $T$ belongs to $\cT$.

\noindent {\bf Case 2: \boldmath{$d_{T'}(w)=2$}.} Let $x$ be the neighbor of $w$ matched by $M$, let $a$ be its other (black) neighbor, and let $b$ the (white) neighbor of $a$ matched by $M$. By definition of $\cT$, the vertex $b$ is a leaf and every neighbor of $x$, if any, different from $w$, is colored black.
If $d(x) \ge 3$, then the set consisting of all black vertices in $T'$ different from $x$, together with $v$ forms a locating-dominating set of $T$ of cardinality $\frac{n-2}{2}$, a contradiction. Hence, $d(x) \le 2$.

If $d(x) = 1$, then we can recolor $x$ white and $w$ black, and we can extend the resulting coloring to $T$ by coloring $v$ black and coloring $u$ white. In this case, $T$ clearly belongs to $\cT$.
If $d(x)=2$, let $c$ be the black neighbor of $x$ and let $d$ be the white neighbor of $c$ matched to $c$ by $M$. If the vertex $d$ is not a leaf, then as before the set consisting of all black vertices in $T'$ different from $x$, together with $v$ forms a locating-dominating set of $T$ (where $d$ is located by $c$ and its other black neighbors and $x$ is located by $c$ only) of cardinality $\frac{n-2}{2}$, a contradiction. Hence, $d$ is a leaf and we can
recolor $x$ white and $w$ black and extend the coloring to $T$. Once again, $T$ clearly belongs to $\cT$.
\end{proof}

\section{Co-bipartite and split graphs}\label{sec:cobip-split}

Since any split graph or co-bipartite graph $G$ has either independence number or clique number at least half its order, the results in~\cite{conjpaper} mentioned in the introduction imply that $\gL(G)\le \lfloor\frac{n}{2}\rfloor+1$ if $G$ is twin-free and without isolated vertices, where $n$ is the order of $G$. We are able to slightly improve this bound, therefore proving Conjecture~\ref{conj} for these classes.

\begin{thm}
Let $G$ be a twin-free graph of order $n$ with no isolated vertices. If $G$ is a co-bipartite or split graph, then $\gL(G)\le \frac{n}{2}$.
\end{thm}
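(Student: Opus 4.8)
The plan is to treat the two classes separately; in each case I partition $V(G)$ into its two defining parts and build an explicit locating-dominating set out of (almost) one of the parts. I use twin-freeness only in the following local form: if $W$ is either defining part, then distinct vertices of $W$ have distinct neighborhoods \emph{outside} $W$ (two vertices of a clique-part with equal outside-neighborhoods would be closed twins, two vertices of an independent part with equal outside-neighborhoods would be open twins).

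\textbf{Split graphs.} Write $V=I\cup K$ with $I$ independent and $K$ a clique. Since $G$ has no isolated vertex, every vertex of $I$ has a neighbor in $K$, and at most one vertex of $K$ has no neighbor in $I$. The clique $K$ is a vertex cover, hence locating-dominating by Proposition~\ref{prop:VC}. If every vertex of $K$ also has a neighbor in $I$, then $I$ is itself locating-dominating, so $\gL(G)\le\min\{|I|,|K|\}\le n/2$. Otherwise let $z$ be the unique vertex of $K$ with $N(z)=K\setminus\{z\}$; I would then verify that both $K\setminus\{z\}$ and $I\cup\{k\}$ (any $k\in K\setminus\{z\}$) are locating-dominating. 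These are routine: removing $z$ cannot merge two $I$-vertices (their neighborhoods lie in $K\setminus\{z\}$ and are distinct) nor merge $z$ with an $I$-vertex (that would be an open twin of $z$), and adding $k$ to $I$ keeps every $K$-vertex separated by its $I$-neighborhood while dominating $z$. Their sizes $|K|-1$ and $|I|+1$ sum to $n$, so the smaller is at most $n/2$.

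\textbf{Co-bipartite graphs.} Write $V=K_1\cup K_2$ with both parts cliques, and let $z_i$ be the unique vertex of $K_i$ (if any) with no neighbor in the other clique. The basic fact is that $K_i$ is itself locating-dominating exactly when $z_{3-i}$ does not exist: it always separates the opposite clique by twin-freeness, and dominates it precisely when no opposite-clique vertex misses $K_i$. If neither $z_1$ nor $z_2$ exists, $\gL(G)\le\min\{|K_1|,|K_2|\}\le n/2$. If both exist, I would use $(K_1\setminus\{z_1\})\cup\{z_2\}$ and its mirror image: here $K_1\setminus\{z_1\}$ still separates $K_2\setminus\{z_2\}$, the only outside vertex $z_1$ of $K_1$ is separated from each $u\in K_2\setminus\{z_2\}$ because $z_2\in N(u)$ but $z_2\notin N(z_1)$, and domination is immediate; the two sets have sizes $|K_1|$ and $|K_2|$, so again the smaller is at most $n/2$.

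The remaining, genuinely delicate, case is when exactly one $z_i$ exists, say $z_2$. If $|K_2|\le|K_1|$ then $K_2$ is locating-dominating, and if $|K_2|\ge|K_1|+2$ then $K_1\cup\{z_2\}$ is locating-dominating of size $|K_1|+1\le n/2$. The hard sub-case is the near-balanced one $|K_2|=|K_1|+1$ (in particular every odd-order instance), where I must produce a locating-dominating set of size exactly $|K_1|=\lfloor n/2\rfloor$: neither whole clique is admissible and $K_1\cup\{z_2\}$ is one vertex too large. Here I would take $(K_1\setminus\{a\})\cup\{z_2\}$, and the point is to choose $a\in K_1$ whose deletion does not merge two vertices of $K_2\setminus\{z_2\}$, i.e. $a$ is not the sole coordinate in which two of their $K_1$-neighborhoods differ. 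Guaranteeing such an $a$ is where I expect the real work to lie, and I would isolate it as a separating lemma: among $t$ distinct subsets of a ground set, at most $t-1$ ground elements are ``critical'' (the unique symmetric difference of some pair). This follows from an acyclicity argument — choosing one witnessing pair per critical element yields a forest, since any cycle would force a nonempty symmetric difference of distinct singletons to vanish. Applied to the $|K_2|-1=|K_1|$ distinct $K_1$-neighborhoods of $K_2\setminus\{z_2\}$, at most $|K_1|-1$ elements of $K_1$ are critical, so a non-critical $a$ exists. With this $a$, separation inside $K_2\setminus\{z_2\}$ is preserved, each $u$ is separated from $a$ by $z_2$ exactly as above, and domination is automatic, giving $\gL(G)\le n/2$.
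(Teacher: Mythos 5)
Your proof is correct, and although its skeleton (use the two parts as candidate locating-dominating sets, with special handling of the unique vertex having no neighbor in the other part) matches the paper's, both of its key steps are genuinely different. For split graphs, your complementary-pair trick --- $K\setminus\{z\}$ and $I\cup\{k\}$ are both locating-dominating and their sizes sum to exactly $n$ --- settles all size cases at once; the paper instead compares $|I|$ with $n/2$, uses $I\cup\{z\}$ when $|K|-|I|\ge 2$, and closes the near-balanced case with a dichotomy on whether some $y\in I$ sees all of $K\setminus\{z\}$ (a branch that is in fact vacuous, since such a $y$ would be an open twin of $z$); your version is the cleaner one. For co-bipartite graphs the divergence is in the hard case $|K_2|=|K_1|+1$: you produce the deletable vertex $a\in K_1$ from a separating lemma --- among $t$ distinct sets, at most $t-1$ ground elements occur as the unique symmetric difference of some pair --- which is exactly Bondy's theorem on induced subsets, and your forest/telescoping argument is a valid proof of it. The paper avoids this machinery with a more elementary dichotomy: if some $x\in K_1$ is adjacent to \emph{every} vertex of $K_2\setminus\{z_2\}$, then $(K_1\setminus\{x\})\cup\{z_2\}$ is locating-dominating, because $x$ lies in every neighborhood $N(u)\cap K_1$ for $u\in K_2\setminus\{z_2\}$, so deleting it can merge no pair; if no such $x$ exists, then $K_2\setminus\{z_2\}$ itself is locating-dominating, the hypothesis being precisely what separates each vertex of $K_1$ from $z_2$. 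The paper's dichotomy is shorter and self-contained; your lemma costs more but is a reusable classical tool that isolates the exact combinatorial obstruction. One microscopic hole to patch: if $|K_1|=1$, your set $(K_1\setminus\{a\})\cup\{z_2\}=\{z_2\}$ fails to dominate $a$; add the remark that $|K_1|=1$ in this case forces $G\cong P_3$, which has open twins, so that situation never arises.
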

\begin{proof}
\noindent\textbf{Co-bipartite graphs.} If $G$ is co-bipartite, let $X$ and $Y$ be the two cliques partitioning $V(G)$, with $|X| \le |Y|$. If every vertex of $Y$ has a neighbor in $X$, then since $G$ has no closed twins, every vertex in $Y$ has a nonempty and distinct neighborhood within $X$, implying that $X$ is a locating-dominating set of $G$. Thus, in this case, $\gL(G) \le |X| \le \frac{n}{2}$. Hence we may assume that $Y$ has a vertex $y$ with no neighbor in $X$. Since $G$ has no closed twins, such a vertex $y$ is unique.

If there is a vertex $x\in X$ that has no neighbor in $Y$, since $G$ is twin-free, $x$ is unique and the set $(X\setminus\{x\}) \cup\{y\}$ is a locating-dominating set of $G$, and once again $\gL(G) \le |X| \le \frac{n}{2}$. Hence, we may assume that every vertex in $X$ has a neighbor in $Y$. Since $G$ has no closed twins, every vertex in $X$ has a nonempty and distinct neighborhood within $Y$, implying that $X \cup \{y\}$ is a locating-dominating set of $G$. Hence, if $|Y|-|X|\ge 2$, then $\gL(G) \le |X| + 1 \le \frac{n}{2}$. We may therefore assume that $|Y|-|X|\le  1$, for otherwise we are done. If there is no vertex in $X$ that is adjacent to every vertex of $Y \setminus \{y\}$, then $Y \setminus \{y\}$ is a locating-dominating set of $G$, and so $\gL(G) \le |Y|-1 \le |X| \le \frac{n}{2}$. If there is a vertex $x$ in $X$ that is adjacent to every vertex of $Y \setminus \{y\}$, then $(X \setminus \{x\}) \cup\{y\}$ is a
locating-dominating set of $G$, and so $\gL(G) \le |X| \le \frac{n}{2}$. In both cases, $\gL(G) \le \frac{n}{2}$, as desired.

\noindent\textbf{Split graphs.}  The proof for the case of split graphs is similar to that for
co-bipartite graphs. Assume $G$ is a split graph, and let $X$ be a clique and $Y$ be an independent set that form a partition of $V(G)$. We note that $X$ is a vertex cover of $G$. Suppose that $|Y|\ge \frac{n}{2}$. By Proposition~\ref{prop:VC}, the set $X$ is a locating-dominating set of $G$, and so $\gL(G) \le |X| = n - |Y| \le \frac{n}{2}$. Hence, we may assume that $|Y|<\frac{n}{2}$, for otherwise the desired result follows.

If every vertex of $X$ has a neighbor in $Y$, then since $G$ has no closed twins, every vertex in $X$ has a nonempty and distinct neighborhood within $Y$, implying that $Y$ is a locating-dominating set of $G$. Thus, in this case, $\gL(G) \le |Y| <\frac{n}{2}$. Hence we may assume that $X$ has a unique vertex $x$ with no neighbor in $Y$. We note that in this case, the set $Y \cup\{x\}$ is a locating-dominating set of $G$, and so $\gL(G) \le |Y| + 1$.
If $|X|-|Y|\ge 2$, then $|Y| + 1 \le n/2$, and the desired result follows. Hence, we may assume that $|X|-|Y|\le 1$.
If there is no vertex in $Y$ that is adjacent to every vertex of $X \setminus \{x\}$, then $X \setminus \{x\}$ is a locating-dominating set of $G$, and so $\gL(G) \le |X|-1 \le |Y| < \frac{n}{2}$.
If there is a vertex $y$ in $Y$ that is adjacent to every vertex of $X \setminus \{x\}$, then $(Y \setminus \{y\}) \cup\{x\}$ is a
locating-dominating set of $G$, and so $\gL(G) \le |Y| < \frac{n}{2}$. In both cases, $\gL(G) < \frac{n}{2}$, as desired.
\end{proof}

\section{Conclusion}

Though we have advanced the study of Conjecture~\ref{conj}, it remains wide open. It would be interesting to extend the result of~\cite{conjpaper} for bipartite graphs, to the class of triangle-free graphs. We also raise the question of extending the characterization of extremal trees to the other (structured) classes for which the conjecture is known to hold, including bipartite graphs, split graphs, and co-bipartite graphs.

\medskip

\end{document}